\theoremstyle{definition}
\theoremstyle{plain}
\newtheorem{theorem}{Theorem}
\newtheorem{lemma}[theorem]{Lemma}
\theoremstyle{remark}
\theoremstyle{dotless}
\newcommand{\N}{\mathbb{N}}
\newcommand{\Z}{\mathbb{Z}}
\renewcommand{\phi}{\varphi}
\renewcommand{\rho}{\varrho}
\title{Arithmetic Progressions in  Abundance by Combinatorial Tools}
 \author[M.\ Beiglb\"ock ]{Mathias Beiglb\"ock }
\address{Institut f\"ur Diskrete Mathematik und Geometrie, Technische Universit\"at Wien\endgraf Wiedner Hauptstra\ss e 8-10/104\\ 1040 Wien, Austria} \email{mathias.beiglboeck@tuwien.ac.at}
\thanks{The  author gratefully acknowledges financial support from the Austrian Science Fund (FWF) under grant S9612.}
\subjclass[2000]{05D10}
\keywords{arithmetic progressions, piecewise syndetic sets, van der Waerden's Theorem}
\begin{document}

\begin{abstract} Using the algebraic structure of the  Stone-\v Cech compactification of the integers, Furstenberg and Glasner proved that for arbitrary $k\in\N$, every piecewise syndetic set contains a piecewise syndetic set of $k$-term arithmetic progressions. 

We present a purely combinatorial argument which allows to derive this result directly from van der Waerden's Theorem. 

\end{abstract}

\maketitle

A subset $S$ of $ \Z$ is piecewise syndetic\footnote{piecewise syndetic sets are called \emph{big} in \cite{FuGl98}.} if there exists $r\in \N$ such that $\bigcup_{t=1}^r S-t$ contains arbitrarily long intervals. A subset $S$ of a countable abelian group  $(G,+)$ is piecewise syndetic if there exists a finite set $F$ such that $\bigcup_{t\in F} S-t$ is \emph{thick} in the sense that it contains a shifted copy of any finite subset of $G$. 
 
Given $k,r \in\mathbb N$, van der Waerden's Theorem \cite{Waer27} states that one cell of any partition $\{C_1,\ldots,C_r\}$ of a long enough interval of integers contains a $k$-term arithmetic progression. Since arithmetic progressions are invariant under shifts, it follows that every piecewise syndetic set contains arbitrarily long arithmetic progressions. 

It is a simple combinatorial exercise to show that whenever a piecewise syndetic set is devided into  finitely many parts, one cell is piecewise syndetic itself. In particular  the existence of arbitrarily long arithmetic progressions in piecewise syndetic sets implies  that one cell of any finite partition of the integers  contains arbitrarily long arithmetic progressions.  

Taking the above facts into account, we see that Theorem \ref{vdW} is just a slightly disguised version of van der Waerden's Theorem.
\begin{theorem}\label{vdW}
Let $k\in\mathbb \N$ and assume that $S\subseteq \mathbb Z$ is piecewise syndetic.  Then 
\begin{align}\label{OrdinaryVdW}
\{(a,d): a, a+d, \ldots ,a+kd \in S, d\neq 0\}\neq \emptyset.
\end{align}
\end{theorem}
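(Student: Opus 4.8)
The plan is to realize Theorem~\ref{vdW} as a direct consequence of van der Waerden's Theorem via a coloring manufactured out of the piecewise syndeticity of $S$. First I would unwind the definition: since $S$ is piecewise syndetic, fix $r\in\N$ such that $\bigcup_{t=1}^r (S-t)$ contains arbitrarily long intervals. The number $r$ of admissible shifts will play the role of the number of colors.

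Next I would apply van der Waerden's Theorem with $k+1$ terms and $r$ cells to obtain a length $N$ so large that every partition into $r$ cells of an interval of length $N$ has one cell containing a $(k+1)$-term arithmetic progression. I then choose an interval $I$ with $\lvert I\rvert\ge N$ and $I\subseteq\bigcup_{t=1}^r(S-t)$. Each $n\in I$ lies in some $S-t$, that is $n+t\in S$ for at least one $t\in\{1,\ldots,r\}$; I define a coloring $c\colon I\to\{1,\ldots,r\}$ by letting $c(n)$ be such a $t$.

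Applying van der Waerden to $c$ now yields a color $t$ and a nontrivial arithmetic progression $a',a'+d',\ldots,a'+kd'$ inside $I$, every term of which receives color $t$, with $d'\neq 0$. By the definition of $c$ this means $a'+t+jd'\in S$ for all $j\in\{0,\ldots,k\}$. The one step carrying any content is the observation that translating the entire progression by the \emph{fixed} amount $t$ preserves its arithmetic structure: setting $a:=a'+t$ and $d:=d'$ produces $a,a+d,\ldots,a+kd\in S$ with $d\neq 0$, which is precisely a witness for \eqref{OrdinaryVdW}.

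I expect no genuine obstacle; the requisite care is purely bookkeeping. One must invoke van der Waerden with $k+1$ rather than $k$ terms so as to match the $k+1$ points appearing in \eqref{OrdinaryVdW}, must ensure $I$ is long enough that the guaranteed monochromatic progression lies entirely within $I$ (so that $c$ is defined on every one of its terms), and must note that the single shift $t$ applies simultaneously to all terms precisely because they share a common color. The nontriviality $d\neq 0$ is inherited verbatim from van der Waerden's Theorem.
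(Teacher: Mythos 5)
Your proof is correct and is exactly the argument the paper sketches for this theorem (the paper gives no formal proof, only the preceding discussion: color a long interval of $\bigcup_{t=1}^r(S-t)$ by a witnessing shift $t$, apply van der Waerden, and translate the monochromatic progression back into $S$). Nothing to add.
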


Furstenberg and Glasner \cite{FuGl98} used the algebraic structure which may be imposed on the Stone-\v Cech compactification of $\Z$ to prove that every piecewise syndetic set contains a piecewise  syndetic set of arithmetic progressions.

\begin{theorem}\label{AbundanceVdW}
Let $k\in\mathbb \N$ and assume that $S\subseteq \mathbb Z$ is piecewise syndetic.  Then 
\begin{align}\label{OrdinaryVdW}
\{(a,d): a, a+d, \ldots ,a+kd \in S\}
\end{align}
is piecewise syndetic in $\Z^2$.
\end{theorem}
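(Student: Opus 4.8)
The plan is to use the description of piecewise syndeticity by boxes. Since $\Z^2$ is a countable abelian group, $A$ is piecewise syndetic precisely when there is a finite set $F\subseteq\Z^2$ with $\bigcup_{v\in F}(A-v)$ thick, and a set is thick in $\Z^2$ exactly when it contains a translate of every box $\ohat n\times\ohat n$. Choosing $F\subseteq\{-P,\ldots,P\}^2$, the membership $(a,d)\in\bigcup_{v\in F}(A-v)$ says that some $(b,e)$ with $|b-a|,|e-d|\le P$ satisfies $b+ie\in S$ for $0\le i\le k$. So it suffices to prove the quantitative statement: there is $P=P(k,r)$ such that for every $N$ one finds a box $B$ of side $N$ in which $A$ meets every sub-box of side $P$; equivalently, for every pair of length-$P$ intervals inside a large range, $S$ contains a $(k+1)$-term progression whose starting point and common difference lie in those intervals. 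I would organize the whole argument around producing such $P$-dense families of progressions.

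First I would extract the working window. Since $S$ is piecewise syndetic there is $r$ with $\bigcup_{t=1}^r(S-t)$ thick, so for each $L$ there is an interval $J$ of length $L$ on which $S$ has all gaps at most $r$; in particular every $x\in J$ admits a correction $c(x)\in\ohat{r-1}$ with $x+c(x)\in S$. The key device is to colour $J$ by the local pattern of $S$: for a parameter $R$ put $\chi(x)=S\cap[x,x+R)$, a subset of $\ohat{R-1}$, giving at most $2^{R}$ colours. Applying van der Waerden's Theorem to $\chi$ on a sufficiently long $J$ yields a progression $x_0,x_0+\lambda,\ldots,x_0+K\lambda$, with $K$ as large as I like, on which $\chi$ is constant equal to some $\pi\subseteq\ohat{R-1}$; as the gaps of $S$ are at most $r$, this $\pi$ is itself $r$-syndetic in $\ohat{R-1}$. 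Thus $S$ contains the aligned family $\bigcup_{j=0}^{K}\bigl((x_0+j\lambda)+\pi\bigr)$ of $K+1$ identical copies of $\pi$.

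The elegant point is that progressions running \emph{diagonally} across these copies are governed by progressions inside the single pattern $\pi$. Placing the $i$-th term into copy $j_0+i\mu$ forces its offset to be $\sigma_0+i\beta$, so that $b+ie\in S$ for $0\le i\le k$ as soon as $\sigma_0,\sigma_0+\beta,\ldots,\sigma_0+k\beta\in\pi$; the resulting progression in $S$ has starting point $x_0+j_0\lambda+\sigma_0$ and common difference $\mu\lambda+\beta$. Hence, up to the translation $(x_0,\lambda)$ and the free coarse parameters $j_0,\mu$, the attainable pairs $(b,e)$ are exactly the progression pairs of $\pi$. This reduces box-density of $A$ for $S$ to box-density for the bounded, $r$-syndetic pattern $\pi$, with the difference split as $\mu\lambda+\beta$: the multiples $\mu\lambda$ supply a coarse syndetic skeleton of differences, while the residuals $\beta$ must fill the scale $[0,\lambda)$ densely.

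The main obstacle is precisely this last point: upgrading the single progression granted by Theorem \ref{vdW} to a family that is syndetic in \emph{both} the starting point and the common difference, i.e.\ making the residuals $\beta$ range $P$-densely through a full interval. This is the genuinely two-dimensional content, and I expect to resolve it in two steps. First I would use the quoted partition fact — any finite partition of a piecewise syndetic set has a piecewise syndetic cell — applied to the thick set $\bigl(\bigcup_{t}(S-t)\bigr)\times\Z$ in $\Z^2$ coloured by the correction $c$, so as to localize to a single correction value and keep $F$ finite. Second, in place of the one-dimensional van der Waerden I would invoke the multidimensional (Gallai–Witt) theorem on the parameter plane, colouring $(b,e)$ by the pattern $\bigl(\Eins_S(b+ie)\bigr)_{i=0}^{k}$: a monochromatic homothetic grid furnishes the two independent syndetic directions at once, and the finiteness of the pattern alphabet together with the bounded gaps of $S$ forces the corrections into a set of size $P(k,r)$. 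Assembling these ingredients — the reduction to $\pi$, the coarse skeleton $\mu\lambda$, and the grid filling the residual scale — should yield the desired $P$-dense box of progressions, and hence the piecewise syndeticity of $A$ in $\Z^2$.
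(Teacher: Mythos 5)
Your reformulation of piecewise syndeticity via $P$-dense boxes is correct, and your opening moves (pass to an interval where $S$ has gaps at most $r$, colour by local patterns, run van der Waerden) are a reasonable start; but the proof is not complete, and the point at which it stops --- ``upgrading the single progression granted by Theorem \ref{vdW} to a family that is syndetic in both the starting point and the common difference'' --- is exactly the content of the theorem, not a technicality. Concretely, your pattern construction confines the attainable pairs $(b,e)$ to $\bigcup_{j,\mu}\bigl([x_0+j\lambda,\,x_0+j\lambda+R]\times[\mu\lambda,\,\mu\lambda+R]\bigr)$, and van der Waerden gives no control of the difference $\lambda$ in terms of the pattern length $R$; when $\lambda\gg R$ these islands are separated by gaps of order $\lambda$, so no bound $P=P(k,r)$ independent of the window can come out of this configuration. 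The two tools you propose for closing the gap do not do so as stated: Gallai--Witt applied to the colouring $(b,e)\mapsto(\Eins_S(b+ie))_{i=0}^k$ returns a monochromatic grid of \emph{some} colour, with no guarantee that it is the all-ones colour (and a single grid is in any case far from a piecewise syndetic family), while the partition fact applied to $\bigl(\bigcup_t(S-t)\bigr)\times\Z$ coloured by the correction of the first coordinate controls only starting points and says nothing about progressions.

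The missing idea is to move the whole problem to the parameter plane \emph{before} colouring. The paper takes $K$ from van der Waerden so that every $r$-colouring of a $(K+1)$-term progression has a monochromatic $(k+1)$-term sub-progression, observes that $B=\{(A,D): A,A+D,\ldots,A+KD\in\bigcup_{t=1}^r S-t\}$ is \emph{thick} in $\Z^2$ (this is where thickness of $\bigcup_t S-t$ is spent, and it is easy: fit the whole box of progressions inside one long interval), and then colours $B$ by the finitely many data $(\alpha,\delta,t)$ recording \emph{where} a monochromatic sub-progression sits. One colour class $M$ is piecewise syndetic by the partition fact, and the affine map $(A,D)\mapsto(A+\alpha D+t,\delta D)$, which preserves piecewise syndeticity, sends $M$ into the target set. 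This sidesteps your residual-scale problem entirely, because the uncontrolled quantities $\alpha$, $\delta$, $t$ live in a fixed finite alphabet rather than appearing as unbounded spacings. Your argument could likely be repaired by replacing the single monochromatic progression of patterns with this thick set $B$ of progression parameters, but as written there is a genuine gap.
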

The purpose of this paper is to show how Theorem \ref{AbundanceVdW} can be derived from van der Waerden's Theorem by elementary tools, in particular, without using the axiom of choice. Note that in contrast to \cite{FuGl98} our argument does not provide a new proof of van der Waerden's Theorem.

We will employ the following simple lemma which we do not prove.
\begin{lemma}
If $M\subseteq \mathbb Z^2$  is piecewise syndetic and $x,y \in \mathbb Z, z\in \mathbb Z\setminus \{0\}$ then 
\begin{align}
\{(A_1+xA_2+y, z A_2): (A_1,A_2)\in M\}\end{align} is piecewise syndetic as well.
\end{lemma}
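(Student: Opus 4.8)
The plan is to factor the affine map $\phi(A_1,A_2)=(A_1+xA_2+y,\,zA_2)$ into pieces that are individually easy to analyze, and to check that each piece preserves piecewise syndeticity. Writing $U(A_1,A_2)=(A_1+xA_2,A_2)$, $S(B_1,B_2)=(B_1,zB_2)$ and $\tau(C_1,C_2)=(C_1+y,C_2)$ one has $\phi=\tau\circ S\circ U$, so it suffices to treat $\tau$, $U$ and $S$ separately. Both $\tau$ and $U$ are affine automorphisms of $\Z^2$ — the latter has integer inverse $(B_1,B_2)\mapsto(B_1-xB_2,B_2)$ — and for any affine automorphism $\psi(a)=\psi_0 a+v$ one checks straight from the definitions that $\psi$ maps thick sets to thick sets and piecewise syndetic sets to piecewise syndetic sets: given a finite $K$, apply $\psi$ to a shifted copy of $\psi_0\me(K)$ contained in the set, and use $\psi(M-t)=\psi(M)-\psi_0(t)$ to transport the witnessing finite set. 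Hence $U(M)$ is piecewise syndetic, and it remains to handle the genuinely non-invertible scaling $S$.

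For the scaling step set $N:=U(M)$, pick a finite $F$ with $\bigcup_{t\in F}(N-t)$ thick, and write $\Lambda:=S(\Z^2)=\Z\times z\Z$, a subgroup of index $|z|$. Let $R$ be a set of coset representatives of $\Lambda$ in $\Z^2$, so that every $p\in\Z^2$ has a unique decomposition $p=S(q)+r$ with $q\in\Z^2$, $r\in R$; define $\pi(p):=q$. A pointwise check using the injectivity of $S$ gives the identity $\bigcup_{r\in R}\big(S(X)+r\big)=\pi\me(X)$ for every $X\subseteq\Z^2$. Applying this with $X=N-F$ and using $S(N-F)=S(N)-S(F)$ exhibits $\pi\me(N-F)$ as a finite union of translates of $S(N)$, namely $\bigcup_{t\in S(F)-R}(S(N)-t)$. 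Therefore, once we know that $\pi\me(T)$ is thick whenever $T$ is thick, we may conclude that $S(N)$ — and hence $\phi(M)=\tau(S(N))$ — is piecewise syndetic.

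The crux is thus the claim that $\pi\me$ preserves thickness; this is the only place where the loss of invertibility matters. After replacing $z$ by $-z$ via the automorphism $(B_1,B_2)\mapsto(B_1,-B_2)$ if necessary, we may assume $z>0$, so that $\pi(B_1,B_2)=(B_1,\lfloor B_2/z\rfloor)$. Given a finite $K$, choose $c$ with $K\subseteq[-c,c]^2$; the point is that $\pi(g+K)$ is always contained in a box of width $2c+1$ and height at most $2c/z+2$, independently of the shift $g$. Since $T$ is thick it contains an arbitrarily large box $[a_1,b_1]\times[a_2,b_2]$, and a routine computation shows that choosing $g_1=a_1+c$ and $g_2=z(a_2+c+1)$ forces $\pi(g+K)\subseteq[a_1,b_1]\times[a_2,b_2]\subseteq T$, i.e.\ $g+K\subseteq\pi\me(T)$. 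The main obstacle is precisely this last estimate: the floor function makes $\pi$ fail to be a homomorphism, so one must verify that its interaction with the shift $g$ stays bounded, which is exactly what taking the box in $T$ large enough absorbs.
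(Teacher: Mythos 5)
The paper explicitly declines to prove this lemma (``which we do not prove''), so there is no in-paper argument to compare yours against; judged on its own, your proof is correct and complete. The factorization $\phi=\tau\circ S\circ U$ cleanly isolates the only genuine difficulty, the non-surjective scaling $S(B_1,B_2)=(B_1,zB_2)$: the two affine automorphisms are disposed of by the standard observation that an automorphism carries thick sets to thick sets and transports the witnessing finite set via $\psi(M-t)=\psi(M)-\psi_0(t)$, and your identity $\bigcup_{r\in R}\bigl(S(X)+r\bigr)=\pi\me(X)$ (valid by uniqueness of the decomposition $p=S(q)+r$) correctly exhibits $\bigcup_{s\in S(F)-R}\bigl(S(N)-s\bigr)$ as $\pi\me$ of a thick set, reducing everything to the claim that $\pi\me$ preserves thickness. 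That claim is where the work is, and your box computation with $\pi(B_1,B_2)=(B_1,\lfloor B_2/z\rfloor)$ does establish it: with $g_1=a_1+c$ and $g_2=z(a_2+c+1)$ the image $\pi(g+K)$ lands in $[a_1,a_1+2c]\times[a_2+1,a_2+2c+1]$, so any box in $T$ with $b_1\ge a_1+2c$ and $b_2\ge a_2+2c+1$ absorbs it, and thickness of $T$ supplies such a box. One could argue marginally more directly by checking in a single step that the image of a thick set under $(A_1,A_2)\mapsto(A_1+xA_2,zA_2)$ becomes thick after adjoining the $|z|$ translates by $(0,j)$, $0\le j<|z|$, but your modular decomposition has the advantage of making explicit exactly where invertibility is used and where it fails, which is precisely the point the word ``simple'' in the paper glosses over.
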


\begin{proof}[Proof of Theorem \ref{AbundanceVdW}]
Choose $r\in \mathbb N$ such that $\bigcup_{t=1}^r S-t $ is thick, i.e.\ contains arbitrarily long intervals. By van der Waerden's Theorem there exists $K\in\mathbb N$ such that whenever $\{A, A+D,\ldots, A+KD\}, A,D\in \mathbb Z, D\neq 0$ is partitioned into $r$ cells, one cell contains  a $(k+1)$-term arithmetic progression. 
Using the assumption that $\bigcup_{t=1}^r S-t $ is thick, one easily verifies  that the set 
\begin{align}
  B:=\Big\{(A,D)\in \mathbb Z^2: A, A+D, \ldots , A+KD\in \bigcup_{t=1}^r S-t\Big\}
\end{align}
is thick in $\mathbb Z^2$.

Given an arithmetic progression  $\{A,A+D,\ldots, A+KD\}\subseteq \bigcup_{t=1}^r S-t $ there exists some $t\in \{1,\ldots , r\}$ such that  $\{A, A+D,\ldots, A+KD\} \cap (S-t)$ contains a $(k+1)$-term arithmetic progression.
 Thus we can assign to each pair $(A,D)\in B$ a triple $(\alpha,\delta,t)=:\phi(A,D)\in \{0,\ldots,K\}\times \{1,\ldots,K\}\times\{1,\ldots,r\}$, such that  
\begin{align}
(A+\alpha D) + i\cdot (\delta D)\in S-t 
\end{align}
for $i=0,\ldots, k$. The function $\phi$  constitutes a finite coloring of $B$.  Thus there exist $\alpha\in\{0,\ldots,K\}, \delta\in\{1,\ldots,K\}$ and $ t\in \{1,\ldots,r\} $ such that 
\begin{align}
M:=\{(A,D)\in B: \phi(A,D)=(\alpha,\delta,t)\}
\end{align}
 is piecewise syndetic. By definition of $\phi$, $(A+\alpha D) + i\cdot (\delta D) \in S-t$  for all $i\in\{0,\ldots,k\}$ and $(A,D)\in M$. Set 
\begin{align}
\widetilde M:=\{(A+\alpha D +t , \delta D): (A,D)\in M\}.   
\end{align}
Piecewise syndeticity of $M$ implies piecewise syndeticity of $\widetilde M$ and $a, a+d, \ldots, a+kd\in S$ for all $(a,d)\in \widetilde M$.
\end{proof}

In \cite{BeHi01,HiLS02}  Theorem \ref{AbundanceVdW} is extended to more general (semi-)groups and various other notions of largeness using ultrafilter techniques. We don't know whether the combinatorial argument presented here can be enhanced in a way which makes it applicable to a more abstract setting. 

\def\ocirc#1{\ifmmode\setbox0=\hbox{$#1$}\dimen0=\ht0 \advance\dimen0
  by1pt\rlap{\hbox to\wd0{\hss\raise\dimen0
  \hbox{\hskip.2em$\scriptscriptstyle\circ$}\hss}}#1\else {\accent"17 #1}\fi}

\end{document}